\numberwithin{equation}{section}
\title{An Approximate Version of the Jordan von Neumann Theorem for Finite Dimensional Real Normed Spaces}
\author{Benjamin Passer}
\begin{document}

\bibliographystyle{plain}
\maketitle

\newtheorem{defin}[equation]{Definition}
\newtheorem{lem}[equation]{Lemma}
\newtheorem{prop}[equation]{Proposition}
\newtheorem{thm}[equation]{Theorem}
\newtheorem{claim}[equation]{Claim}
\newtheorem{ques}[equation]{Question}
\newtheorem{fact}[equation]{Fact}
\newtheorem{axiom}[equation]{Technical Axiom}
\newtheorem{newaxiom}[equation]{New Technical Axiom}
\newtheorem{cor}[equation]{Corollary}

Washington University in St. Louis    ;       E-mail: bpasser@math.wustl.edu

\begin{abstract}
It is known that any normed vector space which satisfies the parallelogram law is actually an inner product space. For finite dimensional normed vector spaces over $\mathbb{R}$, we formulate an approximate version of this theorem: if a space approximately satisfies the parallelogram law, then it has a near isometry with Euclidean space. In other words, a small von Neumann Jordan  constant $\varepsilon + 1$ for $X$ yields a small Banach-Mazur distance with $\mathbb{R}^n$, $d(X, \mathbb{R}^n) \leq 1 + \beta_n \varepsilon + O(\varepsilon^2)$. Finally, we examine how this estimate worsens as $n = dim(X)$ increases, with the conclusion that $\beta_n$ grows quadratically with $n$.
\end{abstract}

{\bf Keywords:} Banach-Mazur distance; paralellogram law; inner product; von Neumann-Jordan constant

{\bf AMS Subject Classification:} 15A04; 15A60; 15A63

\section{Introduction}

A well known theorem of Jordan and von Neumann (see \cite{jordan}) states that every real or complex normed vector space $X$ which satisfies the paralellogram law
\begin{equation*} ||a + b||^2 + ||a - b||^2 = 2(||a||^2 + ||b||^2) \end{equation*}
for all $a, b \in X$ is an inner product space. Perhaps the most interesting facet of this theorem is its connection of a condition on arbitrary two-dimensional subspaces to a property described in at least three dimensions, namely, the additivity of the inner product:
\begin{equation*} \langle a + b, c \rangle = \langle a, c \rangle + \langle b, c \rangle. \end{equation*}
Alternatively, if each two dimensional subspace of $X$ is isometric with $\mathbb{R}^2$ (or $\mathbb{C}^2$), then $X$ is an inner product space. 

In real inner product spaces, we may describe the inner product in terms of the norm in a number of ways, such as
\begin{equation*} \langle a, b \rangle  = \cfrac{1}{4}(||a + b||^2 - ||a - b||^2), \end{equation*}
and each such expression can be evaluated even in spaces equipped only with a norm. This, then, leads to the following observation: approximate satisfaction of the parallelogram law leads to approximate bilinearity of the operator $\langle \cdot , \cdot \rangle$ as defined above. Approximate satisfaction of the paralleogram law is measured by the von Neumann-Jordan constant, 
the smallest $M \in [1, 2]$ such that 
\begin{equation} \label{eq:main}\cfrac{1}{M} \le \cfrac{ ||a + b||^2 + ||a - b||^2 }{ 2(||a||^2 + ||b||^2) } \le M \end{equation}
for all nonzero $a, b \in X$. 
Using this idea, we will show that the Jordan von Neumann theorem has an approximate version for finite dimensional spaces. That is, if $X$ has a small von Neumann-Jordan constant, then the Banach-Mazur distance between $X$ and $\mathbb{R}^n = \ell_2^n$ is close to $1$. Recall that for two
normed vector spaces $X$ and $Y$ over the same field with the same finite dimension, the Banach-Mazur distance between $X$ and $Y$ is 
\begin{equation*} d(X, Y) = \textrm{inf}\{ || \Lambda ||\cdot| |\Lambda ||^{-1}: \Lambda : X \to Y \textrm{ is a linear isomorphism}\} \geq 1. \end{equation*} 

This result is very simple in nature, and to my knowledge (and surprise) it appears not to have been previously discussed. 
Of course, the von Neumann-Jordan constant remains of other interest. For example, recent work by F. Wang has provided the final push to show that the von Neumann-Jordan constant $C_{NJ}(X)$ satisfies
\begin{equation*} C_{NJ}(X) \leq \textrm{sup} \{ \textrm{min} \{ ||x + y||, ||x - y|| \}: ||x|| = ||y|| = 1\}, \end{equation*}
the latter expression being known as the James constant (see \cite{wang}). Slightly closer in flavor to the results presented here, a result of K. Hashimoto and G. Nakamura in \cite{approx} establishes that a different type of approximate Jordan von Neumann Theorem fails. They demonstrate that the modified von Neumann-Jordan constant
\begin{equation*} \tilde{C}_{NJ}(X) = \textrm{inf} \{ C_{NJ}(X, | \cdot |): |\cdot| \textrm{ is an equivalent norm to } ||\cdot|| \textrm{ on } X\}\end{equation*}
of value 1 is not sufficient to determine if a Banach space $X$ is isomorphic with a Hilbert space. Of course, this problem deliberately blurs the distinction between isometry and isomorphism, and as such it is not relevant for finite-dimensional spaces. Regardless, we should expect that any relationship we establish tying $C_{NJ}(X)$ to $d(X, \mathbb{R}^n)$ must grow worse as $n = \textrm{dim}(X)$ increases,   a necessity that can also be seen by examining the Lebesgue spaces with increasing finite dimension. With all this in mind, we arrive at the precise statement of the main theorem (later proved as Theorem \ref{thm:main2}).
\begin{thm}
	Let $n \geq 2$. Then there is a function $K_n(\varepsilon) = 1 + (18n^2 - 17n + 14)\varepsilon + O(\varepsilon^2)$ such that for any $n$-dimensional real normed vector space $X$ with von Neumann-Jordan constant $\varepsilon + 1$ sufficiently small, $d(X, \mathbb{R}^n) \leq K_n(\varepsilon)$.
\end{thm}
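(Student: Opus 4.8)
The plan is to realize the Euclidean structure predicted by the exact Jordan--von Neumann theorem as a genuine inner product obtained by freezing the polarization form on a well-chosen basis, and then to show that the resulting Euclidean norm is $(1+O(n^2\varepsilon))$-equivalent to $\|\cdot\|$. Write $\langle a,b\rangle := \tfrac14(\|a+b\|^2 - \|a-b\|^2)$ for the polarization form of the introduction. It is automatically symmetric, odd in each slot, and satisfies $\langle a,a\rangle = \|a\|^2$ exactly, so the only inner-product axiom in doubt is bilinearity. First I would convert the hypothesis into a usable defect estimate: unwinding \eqref{eq:main} with $M = 1+\varepsilon$ gives
\begin{equation*} \bigl|\,\|a+b\|^2 + \|a-b\|^2 - 2(\|a\|^2+\|b\|^2)\,\bigr| \le 2\varepsilon\,(\|a\|^2+\|b\|^2) + O(\varepsilon^2), \end{equation*}
a quantitative parallelogram law valid for all $a,b$.

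Next I would prove the approximate bilinearity promised in the introduction. The engine is the exact identity
\begin{equation*} \langle x,z\rangle + \langle y,z\rangle = 2\Bigl\langle \tfrac{x+y}{2},\,z\Bigr\rangle, \end{equation*}
which one derives by applying the parallelogram law to the pairs $\bigl(\tfrac{x+y}{2}\pm z,\ \tfrac{x-y}{2}\bigr)$ and subtracting. Replacing each exact parallelogram law by the defect estimate above turns this into an approximate identity with error $O(\varepsilon)$ on the unit ball; specializing $y=0$ (where $\langle 0,z\rangle = 0$) gives approximate $2$-homogeneity, and combining the two yields approximate additivity $\langle x+y,z\rangle \approx \langle x,z\rangle + \langle y,z\rangle$. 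Iterating additivity over dyadic rationals and then invoking uniform continuity of $\|\cdot\|$ on bounded sets upgrades this to a controlled real homogeneity estimate $\langle tx,z\rangle \approx t\langle x,z\rangle$ for $|t|\le 1$.

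With approximate bilinearity in hand I would fix an Auerbach basis $e_1,\dots,e_n$ (so $\|e_i\|=1$ and $|x_i|\le\|x\|$ whenever $x=\sum_i x_i e_i$) and define a genuine symmetric bilinear form $B$ by its Gram matrix $B(e_i,e_j):=\langle e_i,e_j\rangle$, extended bilinearly. Expanding $\|x\|^2 = \langle x,x\rangle = \langle \sum_i x_i e_i,\sum_j x_j e_j\rangle$ by repeated use of the additivity and homogeneity estimates collapses it to $\sum_{i,j} x_i x_j\langle e_i,e_j\rangle = B(x,x)$ up to an error that is a sum of $O(n^2)$ terms, each $O(\varepsilon)$ with coefficients bounded by the Auerbach property; this yields $\bigl|\,\|x\|^2 - B(x,x)\,\bigr| \le c\,n^2\varepsilon\,\|x\|^2 + O(\varepsilon^2)$. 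For $\varepsilon$ small this forces $B$ to be positive definite, so $\|x\|_B := \sqrt{B(x,x)}$ is a Euclidean norm admitting a $(1 + O(n^2\varepsilon))$ two-sided comparison with $\|\cdot\|$; the identity map then certifies $d(X,\mathbb{R}^n) \le 1 + (18n^2-17n+14)\varepsilon + O(\varepsilon^2)$ once the constants are optimized.

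The main obstacle is the quantitative bookkeeping rather than any single conceptual step. Two points require care: first, bootstrapping additivity to real-scalar homogeneity without letting the error blow up (the dyadic iteration must be balanced against the modulus of continuity of the norm); and second, tracking the accumulated error through the $O(n^2)$ terms of the Gram expansion tightly enough to extract the exact coefficient $18n^2-17n+14$ rather than a cruder $O(n^2)$ bound. It is precisely this double sum over basis pairs, together with the Auerbach bound $|x_i|\le\|x\|$ that controls each coefficient, that produces the quadratic growth in $n$ anticipated in the introduction.
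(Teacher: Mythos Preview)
Your global strategy---freeze the polarization form on a basis to obtain a genuine inner product $B$, then compare $\|x\|^2$ with $B(x,x)$---is reasonable, but the error bookkeeping has a real gap, and the stated coefficient $18n^2-17n+14$ will not emerge from it. The problem is the claim that the $O(n^2)$ terms in the Gram expansion are ``each $O(\varepsilon)$ with coefficients bounded by the Auerbach property.'' Auerbach gives $|x_i|\le\|x\|$, but the additivity defect carries terms like $\|A\pm B\|^2$ where $A,B$ are complementary \emph{partial sums} $\sum_{i\in S}x_ie_i$. An Auerbach basis gives no better bound on these than $\|\sum_{i\in S}x_ie_i\|\le\sum_{i\in S}|x_i|\le |S|\cdot\|x\|$, which is $O(n)$, not $O(1)$. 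So each additivity step contributes $O(n^2\varepsilon)$, and with roughly $n$ steps per slot the accumulated linear coefficient is at least cubic in $n$. What makes the paper's bilinearity lemma yield only $O(n\varepsilon)$ is the hypothesis $\|\Lambda\|\le 1$ on a map from $\mathbb{R}^{n-1}$, i.e.\ $\|\sum a_ix_i\|^2\le\sum a_i^2$ for \emph{all} subsums---a near-Euclidean condition strictly stronger than Auerbach, and one you cannot assume a priori since it is essentially the conclusion.

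The paper obtains that control by induction on $n$: an $(n-1)$-dimensional subspace $Y$ is already $K_{n-1}(\varepsilon)$-close to Euclidean, and the Borsuk--Ulam theorem is invoked to find a unit $x_n$ with $[x_i,x_n]=0$ exactly for all $i<n$, whence $d(X,\,Y\oplus\mathbb{R})\le 1+(36n-35)\varepsilon+O(\varepsilon^2)$. Multiplying by the inductive bound gives the recursion $\beta_n=\beta_{n-1}+36n-35$, and \emph{that} is where $18n^2-17n+14$ comes from. The constant is an artifact of this specific induction (including the exact orthogonality supplied by Borsuk--Ulam); ``optimizing constants'' in a direct Gram expansion over an Auerbach basis will not reproduce it, and as argued above will not even reach the quadratic order without an additional idea controlling partial sums.
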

The 2-dimensional case is proved as Theorem \ref{thm:main} with estimates very similar to the equalities used in \cite{jordan}. The main theorem then results from an inductive argument, in which we use an idea from algebraic topology to establish quadratic growth of the the linear term of $K_n(\varepsilon)$ as $n$ increases.

\section{Two Dimensions}
For the rest of this section, $X$ is a real normed vector space with von Neumann-Jordan constant $M$, so
\begin{equation} \cfrac{1}{M} \le \cfrac{ ||a + b||^2 + ||a - b||^2 }{ 2(||a||^2 + ||b||^2) } \le M \end{equation}
for all nonzero $a, b \in X$. The inequality above will be most useful through its consequential inequality
\begin{equation} \label{eq:2} \left|||a + b||^2 + ||a - b||^2 - 2(||a||^2 + ||b||^2)\right| \le 2\varepsilon(||a||^2 + ||b||^2), \end{equation}
where $\varepsilon = M - 1$ ($\varepsilon$ is greater than $1 - \frac{1}{M}$, but comparable as $M$ approaches 1). 
We use these inequalities to establish that the \textit{bracket}
\begin{equation*} [a, b] = ||a + b||^2 - ||a - b||^2 \end{equation*}
 is approximately bilinear ($[a, b] = 4\langle a , b \rangle$ if $X$ is a Hilbert space). As an easy first step, we note that the bracket is symmetric and homogeneous of degree 2 on $X^2$: 
\begin{equation} \label{eq:homo} [\lambda a, \lambda b] = \lambda^2 [a, b]. \end{equation}
Moreover, homogeneity in one coordinate with scalar $-1$ is trivial:
\begin{equation} \label{eq:negatives} [-a, b] = -[a, b]. \end{equation}
The final immediate observation on the bracket is the Cauchy-Schwarz inequality, scaled by four due to the choice of bracket. (Namely, the bracket is four times the inner product in an inner product space.)
\begin{lem}\label{csi}
Let $x, y \in X$. Then
\begin{equation*}
	|[x, y]| \leq 4||x|| \cdot ||y||.
\end{equation*}
\end{lem}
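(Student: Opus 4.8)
The plan is to prove this directly from the triangle inequality alone, without invoking the approximate parallelogram law at all; indeed the statement should hold in \emph{any} normed space, so I expect no input from the von Neumann-Jordan constant $M$. The idea is that expanding $[x,y] = ||x+y||^2 - ||x-y||^2$ and bounding each term by the appropriate form of the triangle inequality collapses the expression into exactly $4||x||\cdot||y||$.

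First I would bound the positive term from above. By the triangle inequality, $||x+y|| \leq ||x|| + ||y||$, so
\begin{equation*}
||x + y||^2 \leq ||x||^2 + 2||x||\cdot||y|| + ||y||^2.
\end{equation*}
Next I would bound the subtracted term from below using the reverse triangle inequality, $||x - y|| \geq \big|\,||x|| - ||y||\,\big|$, which after squaring gives
\begin{equation*}
||x - y||^2 \geq ||x||^2 - 2||x||\cdot||y|| + ||y||^2.
\end{equation*}
Subtracting the second display from the first, the $||x||^2$ and $||y||^2$ terms cancel and the cross terms add, yielding the upper estimate $[x,y] \leq 4||x||\cdot||y||$.

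To obtain the matching lower bound, I would exploit the reflection antisymmetry recorded in \eqref{eq:negatives}. Since the bracket is symmetric, $[x,-y] = [-y,x] = -[y,x] = -[x,y]$, applying the upper estimate already proved to the pair $(x,-y)$ gives $-[x,y] = [x,-y] \leq 4||x||\cdot||{-y}|| = 4||x||\cdot||y||$. Combining the two one-sided bounds yields $|[x,y]| \leq 4||x||\cdot||y||$, as claimed.

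Because every step is a single application of a norm axiom, there is no genuine obstacle here; the only point requiring mild care is handling \emph{both} signs, which I resolve cleanly by the reflection trick rather than by re-running the triangle inequality in the opposite direction. It is worth flagging explicitly that this lemma is exact (no $\varepsilon$ appears), since it is the one estimate on the bracket that survives unchanged when the parallelogram law holds only approximately.
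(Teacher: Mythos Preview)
Your proof is correct and follows essentially the same route as the paper: bound $||x+y||^2$ above and $||x-y||^2$ below via the triangle and reverse triangle inequalities, then subtract. The only cosmetic difference is that for the lower bound the paper re-runs the two triangle-inequality estimates with the roles reversed, whereas you invoke the antisymmetry $[x,-y]=-[x,y]$ and recycle the upper bound; both are one-line arguments and yield the same conclusion.
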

\begin{proof}
The triangle inequality gives that $||x + y||^2 \leq (||x|| + ||y||)^2 = ||x||^2 + ||y||^2 + 2||x|| \cdot ||y||$. Moreover, $||x - y||^2 \geq \big| ||x|| - ||y|| \big|^2 = ||x||^2 + ||y||^2  - 2||x|| \cdot ||y||$. Subtracting these two inequalities yields 
\begin{equation*}
	[x, y] = ||x + y||^2 - ||x - y||^2 \leq 4||x|| \cdot ||y||.
\end{equation*}
Similarly, $||x + y||^2 \geq \big| ||x|| - ||y|| \big|^2 = ||x||^2 + ||y||^2 - 2||x|| \cdot ||y||$ and $||x - y||^2 \leq (||x|| + ||y||)^2 = ||x||^2 + ||y||^2 + 2||x|| \cdot ||y||$ give 
\begin{equation*}
	[x, y] = ||x + y||^2 - ||x - y||^2 \geq -4||x|| \cdot ||y||.
\end{equation*}
\end{proof}
Armed only with the above observations and the von Neumann-Jordan constant, we may prove that the bracket is approximately bilinear, in a series of steps analagous to those in \cite{jordan}.
\begin{lem}\label{lem:doublelaw} 
Let $x, y, z \in X$. Then 
\begin{equation*}
\begin{aligned}
	\big|[2x, y] - 2[x, y]\big| &\leq 2 \varepsilon(||x + y||^2 + ||x - y||^2 + 2||x||^2) \\
				         &\leq 4 \varepsilon \big[(1 + \varepsilon)(||x||^2 + ||y||^2) + ||x||^2\big].
\end{aligned}
\end{equation*}
\end{lem}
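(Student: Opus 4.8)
The plan is to imitate the exact Jordan--von Neumann doubling identity, replacing each use of the parallelogram law by its approximate form \eqref{eq:2}. In the exact case one obtains $[2x, y] = 2[x, y]$ by applying the parallelogram law to two carefully chosen pairs: the pair $(x + y,\, x)$ and the pair $(x - y,\, x)$. The reason these are the right choices is that $(x+y) + x = 2x + y$ and $(x+y) - x = y$, while $(x - y) + x = 2x - y$ and $(x - y) - x = -y$. Consequently, when the two resulting identities are subtracted, the $||y||^2$ and $||x||^2$ contributions cancel and precisely $[2x, y] - 2[x, y]$ survives on one side. So the entire content of the lemma is that this same cancellation makes the two error terms from the approximate parallelogram law add cleanly.

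Concretely, first I would apply \eqref{eq:2} with $a = x + y$ and $b = x$ to get
\begin{equation*}
	\big|\, ||2x + y||^2 + ||y||^2 - 2(||x + y||^2 + ||x||^2) \,\big| \leq 2\varepsilon(||x + y||^2 + ||x||^2),
\end{equation*}
and then apply it with $a = x - y$ and $b = x$ to get
\begin{equation*}
	\big|\, ||2x - y||^2 + ||y||^2 - 2(||x - y||^2 + ||x||^2) \,\big| \leq 2\varepsilon(||x - y||^2 + ||x||^2).
\end{equation*}
Writing $P$ and $Q$ for the two quantities inside the absolute values, a direct computation gives $P - Q = [2x, y] - 2[x, y]$, since the $||y||^2$ and $-2||x||^2$ terms are common to both and drop out. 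Applying the triangle inequality $|P - Q| \leq |P| + |Q|$ and adding the two bounds then yields the first displayed inequality,
\begin{equation*}
	\big|[2x, y] - 2[x, y]\big| \leq 2\varepsilon(||x + y||^2 + ||x - y||^2 + 2||x||^2).
\end{equation*}

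For the second inequality I would invoke \eqref{eq:2} once more, now in its upper direction with $a = x$ and $b = y$, to control the first two terms: this gives $||x + y||^2 + ||x - y||^2 \leq 2(1 + \varepsilon)(||x||^2 + ||y||^2)$. Substituting this bound into the right-hand side above and factoring out a common $4\varepsilon$ produces exactly $4\varepsilon\big[(1 + \varepsilon)(||x||^2 + ||y||^2) + ||x||^2\big]$, completing the chain.

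I do not expect a genuine obstacle here; the proof is essentially the exact argument with equalities loosened to the inequality \eqref{eq:2}. The only points requiring care are the bookkeeping of which terms cancel in $P - Q$ and ensuring that each $2\varepsilon$ error factor stays attached to its correct pair $(x \pm y,\, x)$ before the two estimates are summed.
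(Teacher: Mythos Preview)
Your argument is correct and is essentially identical to the paper's own proof: the paper also applies \eqref{eq:2} to the pairs $(x+y,x)$ and $(x-y,x)$, subtracts to obtain the first bound, and then applies \eqref{eq:2} once more to $||x+y||^2+||x-y||^2$ to reach the second.
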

\begin{proof}
Apply (\ref{eq:2}) twice to see that
\begin{equation*} ||2x \pm y||^2 + ||\pm y||^2 \approx 2(||x \pm y||^2 + ||x||^2), \end{equation*}
with error at most $\varepsilon$ times the right hand side. Subtract the two approximate equalities, resulting in $|[2x, y] - 2[x, y]| \leq 2\varepsilon(||x + y||^2 + ||x - y||^2 + 2 ||x||^2)$. We may then apply
(\ref{eq:2}) again to the error term:
\begin{equation*}
     	\begin{aligned} 2 \varepsilon(||x + y||^2 + ||x - y||^2 + 2||x||^2) &\leq 2 \varepsilon \big[2(1 + \varepsilon)(||x||^2 + ||y||^2) + 2||x||^2\big] \\
                                                                                                             &= 4 \varepsilon \big[(1 + \varepsilon)(||x||^2 + ||y||^2) + ||x||^2 \big]. 
	\end{aligned}
\end{equation*}
\end{proof}
\begin{lem}\label{lem:addlaw} Let $x, y, z \in X$. Then 
\begin{equation*}	
	\big|[x, z] + [y, z] - [x + y, z]\big| \leq \varepsilon \big[(3 + 2 \varepsilon) ||x + y||^2 + ||x - y||^2 + 8(1 + \varepsilon) ||z||^2 \big].	
\end{equation*}
\end{lem}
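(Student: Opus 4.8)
The plan is to imitate the classical Jordan--von Neumann derivation of additivity, but to organize the defect $[x,z]+[y,z]-[x+y,z]$ as the sum of a ``parallelogram'' piece and a ``doubling'' piece, each of which vanishes in a genuine Hilbert space and is therefore controlled purely by $\varepsilon$. Concretely, I would start from the algebraic splitting
\[
[x,z]+[y,z]-[x+y,z] = \Big([x,z]+[y,z]-\tfrac12[x+y,2z]\Big) + \tfrac12\Big([x+y,2z]-2[x+y,z]\Big),
\]
and bound the two parenthesized quantities, call them $P_1$ and $\tfrac12 P_2$, separately by the triangle inequality.

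First I would treat the parallelogram piece $P_1 := [x,z]+[y,z]-\tfrac12[x+y,2z]$. Expanding the brackets gives $[x,z]+[y,z] = (\|x+z\|^2+\|y+z\|^2)-(\|x-z\|^2+\|y-z\|^2)$, and I would apply the approximate parallelogram law (\ref{eq:2}) to the pairs $(x+z,\,y+z)$ and $(x-z,\,y-z)$, whose sums and differences are $x+y\pm 2z$ and $x-y$. This shows $\|x+z\|^2+\|y+z\|^2 \approx \tfrac12(\|x+y+2z\|^2+\|x-y\|^2)$ and the analogue with $-z$; subtracting and recognizing $\tfrac12[x+y,2z]$ bounds $|P_1|$ by $\varepsilon$ times the four cross terms $\|x\pm z\|^2,\ \|y\pm z\|^2$, which (\ref{eq:2}) in turn converts into a \emph{symmetric} contribution to $\|x+y\|^2$ and $\|x-y\|^2$ plus a multiple of $\|z\|^2$.

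Next I would treat the doubling piece $P_2 := [x+y,2z]-2[x+y,z]$. Using symmetry of the bracket, $[x+y,2z]=[2z,x+y]$, I would invoke Lemma \ref{lem:doublelaw} with its first argument equal to $z$ and its second equal to $x+y$, which gives $\big|[2z,x+y]-2[z,x+y]\big| \le 2\varepsilon(\|x+y+z\|^2+\|x+y-z\|^2+2\|z\|^2)$; a final use of (\ref{eq:2}) on the pair $(x+y,\,z)$ bounds this by a multiple of $\|x+y\|^2$ and $\|z\|^2$ alone. This is exactly the step that breaks the symmetry between $\|x+y\|^2$ and $\|x-y\|^2$: the doubling lemma feeds on the single vector $x+y$, so it contributes only to the $\|x+y\|^2$ coefficient, which is why the final estimate carries roughly $3\|x+y\|^2$ against a lone $\|x-y\|^2$.

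Finally I would add $|P_1|+\tfrac12|P_2|$ and collect terms. The hard part is bookkeeping rather than any conceptual subtlety: one must route every intermediate norm back to the three admissible quantities $\|x+y\|^2$, $\|x-y\|^2$, $\|z\|^2$ using only (\ref{eq:2}), taking care that the conversion of the cross terms $\|x\pm z\|^2,\ \|y\pm z\|^2$ does not inflate the leading coefficients. Adding the symmetric $\varepsilon(\|x+y\|^2+\|x-y\|^2)$ coming from $P_1$ to the asymmetric $2\varepsilon\|x+y\|^2$ coming from $\tfrac12 P_2$ should recover the stated coefficient $(3+2\varepsilon)$ on $\|x+y\|^2$, the coefficient $1$ on $\|x-y\|^2$, and $8(1+\varepsilon)$ on $\|z\|^2$, with the second-order terms produced by the nested applications of (\ref{eq:2}).
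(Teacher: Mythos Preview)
Your splitting is in fact \emph{identical} to the paper's: by the homogeneity (\ref{eq:homo}) one has $\tfrac12[x+y,2z]=2[\tfrac{x+y}{2},z]$, so your $P_1$ is exactly the paper's first piece and your $\tfrac12 P_2$ is the negative of its second piece. The difference lies only in which direction you run (\ref{eq:2}) and which vector you put into the first slot of Lemma~\ref{lem:doublelaw}. The paper takes $a=\tfrac{x+y}{2}\pm z$, $b=\tfrac{x-y}{2}$ in (\ref{eq:2}) (so the error already lands on $\|\tfrac{x+y}{2}\pm z\|^2$ and $\|\tfrac{x-y}{2}\|^2$), and it applies Lemma~\ref{lem:doublelaw} with first argument $\tfrac{x+y}{2}$ (so the error again lands on $\|\tfrac{x+y}{2}\pm z\|^2$ and $\|\tfrac{x+y}{2}\|^2$). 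Those error terms combine and are converted with a single use of (\ref{eq:2}), yielding precisely $(3+2\varepsilon)$, $1$, and $8(1+\varepsilon)$.

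Your choices---$(a,b)=(x+z,y+z)$ in (\ref{eq:2}) and first argument $z$ in Lemma~\ref{lem:doublelaw}---put the raw error on $\|x\pm z\|^2,\|y\pm z\|^2$ and on $\|(x+y)\pm z\|^2$, which you must then push through \emph{two} further applications of (\ref{eq:2}) to reach $\|x\pm y\|^2$ and $\|z\|^2$. Carrying this out gives
\[
|P_1|+\tfrac12|P_2|\ \le\ \varepsilon\big[(3+4\varepsilon+\varepsilon^2)\|x+y\|^2+(1+2\varepsilon+\varepsilon^2)\|x-y\|^2+(8+6\varepsilon)\|z\|^2\big],
\]
so your final claim that one ``should recover the stated coefficient $(3+2\varepsilon)$ \ldots\ the coefficient $1$ on $\|x-y\|^2$'' is not correct: the extra conversion steps insert additional $(1+\varepsilon)$ factors that inflate the $\|x+y\|^2$ and $\|x-y\|^2$ coefficients. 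The argument is sound and proves a lemma with the same leading constants $3,1,8$, but it does not establish the lemma with the exact second-order coefficients as stated; to get those you must orient (\ref{eq:2}) and Lemma~\ref{lem:doublelaw} as the paper does.
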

\begin{proof} 
Apply (\ref{eq:2}) twice to see that 
\begin{equation*} 
	||x \pm z||^2 + ||y \pm z||^2 \approx 2(||\cfrac{x + y}{2} \pm z||^2 + ||\cfrac{x - y}{2}||^2), 
\end{equation*}
with error at most $\varepsilon$ times the right hand side.
Subtracting the above two expressions gives 
\begin{equation*} 
	\left| [x, z] + [y, z] - 2[\cfrac{x + y}{2}, z] \right| \leq 2 \varepsilon\left(||\cfrac{x + y}{2} + z||^2 + ||\cfrac{x + y}{2} - z||^2 + 2 || \cfrac{x - y}{2} ||^2\right)
\end{equation*}
We then apply the first inequality of Lemma \ref{lem:doublelaw}, which implies that 
\begin{equation*}
\begin{aligned}
	\big| [x + y, z] - 2[\cfrac{x + y}{2}, z] \big| &\leq 2 \varepsilon \left(||\cfrac{x + y}{2} + z||^2 + ||\cfrac{x + y}{2} - z||^2 + 2 ||\cfrac{x + y}{2}||^2 \right).
\end{aligned}
\end{equation*}
Thus, the total error in the approximation of $[x + y, z]$ by $[x, z] + [y, z]$ is at most
\begin{equation*}
	2 \varepsilon\left(2 ||\cfrac{x + y}{2} + z||^2 + 2 ||\cfrac{x + y}{2} - z||^2 + 2 || \cfrac{x + y}{2} ||^2 + 2||\cfrac{x - y}{2}||^2 \right) \leq
\end{equation*}
\begin{equation*}
	2 \varepsilon \left[ (1 + \varepsilon)(||x + y||^2 + ||2z||^2) + \cfrac{1}{2}||x + y||^2 + \cfrac{1}{2}||x - y||^2 \right] =
\end{equation*}
\begin{equation*} 
	\varepsilon \left[ (3 + 2\varepsilon)||x + y||^2 + ||x - y||^2  + 8(1 + \varepsilon)||z||^2 \right] \\
\end{equation*}
\end{proof}
\begin{lem}\label{lem:scalelaw} Let $x, y \in X$ and $t \in \mathbb{R}$. Then 
\begin{equation*}
	\big| [tx, y] - t[x, y] \big| \leq \textrm{max}\{1, t^2\}\varepsilon \big[ (8 + 7\varepsilon)||x||^2 + (20 + 20\varepsilon)||y||^2 \big].
\end{equation*}
\end{lem}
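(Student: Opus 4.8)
The plan is to bootstrap the approximate additivity and doubling of the bracket (Lemmas \ref{lem:addlaw} and \ref{lem:doublelaw}) up to scaling by an arbitrary real number, in the classical order: first for dyadic rationals, then for all reals by continuity, while controlling the accumulated error so that it grows only quadratically in $t$. First I would dispense with signs: since $[-a,b]=-[a,b]$ by (\ref{eq:negatives}) and $\max\{1,t^2\}=\max\{1,(-t)^2\}$, the inequality for $t$ is equivalent to that for $-t$, so I may assume $t\ge 0$. I would also keep in mind the two regimes hidden in $\max\{1,t^2\}$: a large regime $t\ge 1$, where the allowed error is $t^2\varepsilon[\cdots]$, and a small regime $0\le t\le 1$, where it is just $\varepsilon[\cdots]$.

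The engine is iteration of Lemma \ref{lem:doublelaw}. Writing $a_j=[2^j x,y]$, that lemma gives $a_{j+1}=2a_j+\delta_j$ with $|\delta_j|\le 4\varepsilon[(1+\varepsilon)(4^j\|x\|^2+\|y\|^2)+4^j\|x\|^2]$, so telescoping yields $a_m-2^m a_0=\sum_{j=0}^{m-1}2^{m-1-j}\delta_j$. The point is that $\sum_{j=0}^{m-1} 2^{m-1-j}4^j=2^{m-1}(2^m-1)$ is dominated by its top term, so $|[2^m x,y]-2^m[x,y]|=O((2^m)^2\varepsilon\|x\|^2)+O(2^m\varepsilon\|y\|^2)$, which is quadratic in $t=2^m$. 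This is exactly the delicate point: a naive one-at-a-time induction via Lemma \ref{lem:addlaw} (passing from $nx$ to $(n+1)x$) incurs an error $\Theta(n^2\varepsilon\|x\|^2)$ at each step and hence a cubic total, which would violate the stated bound, whereas the geometric cancellation in the doubling telescoping is what preserves the quadratic rate. For a general nonnegative integer $n$ I would run the same idea along the binary expansion of $n$ (repeated doubling interleaved with single additions via Lemma \ref{lem:addlaw}), where each inserted error, though amplified by the subsequent doublings, still sums geometrically and stays $O(n^2\varepsilon\|x\|^2)$. Dyadic $t=k/2^m$ is then reached by combining the integer estimate for $[k\,(2^{-m}x),y]$ with $m$ applications of Lemma \ref{lem:doublelaw} read backwards (halving), using homogeneity (\ref{eq:homo}); in the small regime all halving steps occur at scale $\le\|x\|$, so their errors do not grow and the total stays a fixed multiple of $\varepsilon(\|x\|^2+\|y\|^2)$.

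Finally I would pass from dyadic rationals to arbitrary real $t$ by density. Since $[tx,y]=\|tx+y\|^2-\|tx-y\|^2$ is continuous in $t$, choosing dyadics $t_k\to t$ gives $[t_k x,y]\to[tx,y]$ and $t_k[x,y]\to t[x,y]$, while $\max\{1,t_k^2\}\to\max\{1,t^2\}$, so the dyadic inequality passes to the limit, with Lemma \ref{csi} supplying the uniform Lipschitz-type control that makes this limiting step clean. The main obstacle is the bookkeeping that produces the precise constants $(8+7\varepsilon)$ and $(20+20\varepsilon)$: the asymmetry between them reflects that the argument scales the first variable, so $\|x\|^2$-errors are generated by the doublings while $\|y\|^2$-errors enter only through the $8(1+\varepsilon)\|z\|^2$ term of Lemma \ref{lem:addlaw}, and one must track the geometric sums together with the $O(\varepsilon^2)$ corrections carefully enough that a single pair of coefficients, valid uniformly over all $t$ and surviving the limit, can be extracted. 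Guaranteeing the quadratic rate while pinning down these explicit constants is where essentially all the work lies.
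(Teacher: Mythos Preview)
Your route has a genuine gap in the passage to arbitrary real $t$. The integer--then--halving scheme you describe for a dyadic $t=k/2^m$ produces an error whose $\|y\|^2$-contribution is of order $k\,\varepsilon\|y\|^2$: every application of Lemma~\ref{lem:doublelaw} or Lemma~\ref{lem:addlaw} carries a term of size $\sim\varepsilon\|y\|^2$ that does not shrink (you never rescale $y$), and after the telescoping amplification in the integer estimate and the final multiplication by $k$ in $k\bigl([2^{-m}x,y]-2^{-m}[x,y]\bigr)$ these add up to $O(k\,\varepsilon\|y\|^2)$. For dyadic approximants $k_m/2^m\to t$ one has $k_m\sim t\cdot 2^m\to\infty$, so your bound blows up and the continuity step cannot deliver a uniform inequality. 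Your sentence ``in the small regime all halving steps occur at scale $\le\|x\|$, so their errors do not grow'' is correct only for the $\|x\|^2$-part; the $\|y\|^2$-part is precisely where the scheme leaks.

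The paper repairs this by never leaving $[0,1]$ and never multiplying an accumulated error by a growing integer. Setting $S_n=\max_{0\le k\le 2^n}\bigl|[\tfrac{k}{2^n}x,y]-\tfrac{k}{2^n}[x,y]\bigr|$, one gets the contraction $S_{n+1}\le \tfrac12 S_n+A+B$: halving via Lemma~\ref{lem:doublelaw} handles $[0,\tfrac12]$ at level $n{+}1$ from level $n$ at cost $\tfrac12 S_n+A$, and the additive reflection $s\mapsto 1-s$ via Lemma~\ref{lem:addlaw} then reaches $[\tfrac12,1]$ at further cost $B$. Hence $S_n\le 2(A+B)=\varepsilon\bigl[(8+7\varepsilon)\|x\|^2+(20+20\varepsilon)\|y\|^2\bigr]$ for all $n$, and continuity gives the bound for every $t\in[0,1]$ with the exact constants falling out automatically. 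For $t>1$ the paper does not use binary expansions at all: homogeneity (\ref{eq:homo}) and symmetry give $|[tx,y]-t[x,y]|=t^2\bigl|[\tfrac{1}{t}y,x]-\tfrac{1}{t}[y,x]\bigr|$, which reduces at once to the case $1/t\in(0,1]$ with $x$ and $y$ swapped and supplies the factor $t^2=\max\{1,t^2\}$ for free.
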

\begin{proof} Given fixed $x, y \in X$ and $n \in \mathbb{Z}_{\geq 0}$, let
\begin{equation*} S_n = \textrm{max}_{0 \leq k \leq 2^n} \left|[\frac{k}{2^n}, y] - \frac{k}{2^n}[x, y]\right|. \end{equation*}
Clearly $S_0 = 0$, and Lemma $\ref{lem:doublelaw}$ (scaled by half) implies
\begin{equation*} 
\begin{aligned}
	S_1 &\leq 2 \varepsilon \big[(1 + \varepsilon)(||\frac{1}{2}x||^2 + ||y||^2) + ||\frac{1}{2}x||^2\big] \\
	       &= 2 \varepsilon \big[(1 + \varepsilon)(\frac{1}{4}||x||^2 + ||y||^2) + \frac{1}{4}||x||^2\big].
\end{aligned}
\end{equation*}
We can then bound $S_{n + 1}$ in terms of $S_n$.
First, suppose $\cfrac{k}{2^{n + 1}} \in [0, \frac{1}{2}]$. Applying Lemma $\ref{lem:doublelaw}$ in the same way as above, we get that
\begin{equation*}
\begin{aligned}
	| [\frac{k}{2^{n + 1}}x, y] - \frac{1}{2} [\frac{k}{2^n}x, y]| &\leq 2 \varepsilon ((1 + \varepsilon)(||\frac{k}{2^{n + 1}}x||^2 + ||y||^2) + ||\frac{k}{2^{n + 1}}x||^2) \\
										      &\leq 2 \varepsilon ((1 + \varepsilon)(\frac{1}{4}||x||^2 + ||y||^2) + \frac{1}{4}||x||^2).
\end{aligned}
\end{equation*}
Denote the last value by $A$. We then know that $|[\frac{k}{2^{n + 1}}x, y] - \frac{k}{2^{n + 1}}[x, y]| \leq \frac{S_n}{2} + A$, since $\frac{k}{2^n} \in [0, 1]$. 
Next, we apply Lemma $\ref{lem:addlaw}$ to see that
\begin{equation*}
\begin{aligned}
	\left| [\frac{k}{2^{n+1}}x, y] + [(1 - \frac{k}{2^{n+1}})x, y] - [x, y] \right| &\leq \varepsilon \big[ (3 + 2\varepsilon) ||x||^2 + ||(1 - 2 \frac{k}{2^{n+1}})x||^2 + 8(1 + \varepsilon)||y||^2 \big] \\
													&\leq \varepsilon \big[ (3 + 3\varepsilon) ||x||^2 + 8 (1 + \varepsilon) ||y||^2 \big]. \\
\end{aligned}
\end{equation*}
If we denote the last value by $B$, then we have that
\begin{equation*} S_{n + 1} \leq \frac{S_n}{2} + A + B = \frac{S_n}{2} + \varepsilon \left[ (4 + \frac{7}{2}\varepsilon)||x||^2 + (10 + 10\varepsilon)||y||^2 \right].   \end{equation*}

This relationship allows us to induct and show $S_n \leq 2(A + B)$ for all $n$. But, by continuity of the bracket, $|[x, ty] - t[x, y]| \leq 2(A + B)$ for all $t \in [0, 1]$. This then proves the lemma for $t \in [0, 1]$. To obtain the desired result for all $t > 1$, we apply (\ref{eq:homo}) and switch the roles of $x$ and $y$ (also noting that the bracket is symmetric) to see that
\begin{equation*}
	\big| [tx, y] - t[x, y] \big| = t^2 \left| [x, \frac{1}{t}y] - \frac{1}{t}[x, y] \right| \leq t^2 \cdot 2(A + B).
\end{equation*}
Finally, for negative $t$, we simply note $[-a, b] = -[a, b]$ by definition of the bracket, and the entire lemma follows.
\end{proof}
\begin{thm}\label{thm:main} Let $X$ be a two-dimensional normed vector space over $\mathbb{R}$ with von Neumann-Jordan constant $\varepsilon + 1$. Then the Banach-Mazur distance between $X$ and $\mathbb{R}^2$ is at most $\sqrt{\cfrac{1 + 15\varepsilon + 13.5 \varepsilon^2}{1 - 15 \varepsilon - 13.5 \varepsilon^2}}$ provided $\varepsilon$ is small enough that the denominator is positive. This expression
is $1 + 15\varepsilon + O(\varepsilon^2)$ as $\varepsilon$ tends to $0$. \end{thm}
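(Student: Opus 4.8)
The plan is to turn the approximate bilinearity of the bracket into an honest Euclidean structure on $X$ and then bound the distortion between the two norms. Write $B(x,y) = \tfrac14[x,y]$, so that $B(x,x) = \|x\|^2$ exactly. First I would manufacture an ``approximately orthonormal'' basis. Fix a unit vector $u$ and consider the continuous map $w \mapsto [u,w]$ on the unit sphere of $X$; since $[u,u] = 4 > 0$ while $[u,-u] = -4 < 0$ and the sphere is connected, the intermediate value theorem produces a unit vector $v$ with $[u,v] = 0$. Because $[u,v] = 0 \neq [u,\pm u]$, the vectors $u,v$ are independent, so they form a basis with $B(u,u) = B(v,v) = 1$ and $B(u,v) = 0$.

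Using this basis I would define the genuine bilinear form $\langle x,y\rangle = x_1 y_1 + x_2 y_2$, where $x = x_1 u + x_2 v$, and set $\|x\|_2 = \sqrt{\langle x,x\rangle}$. Then $(X,\|\cdot\|_2)$ is isometric to $\mathbb{R}^2$, so it suffices to control the distortion of the identity map $\Lambda\colon (X,\|\cdot\|)\to(X,\|\cdot\|_2)$. Concretely, the target estimate is
\begin{equation*}
\big|\, \|x\|^2 - \|x\|_2^2 \,\big| \le (15\varepsilon + 13.5\varepsilon^2)\,\|x\|_2^2
\end{equation*}
for all $x$. By the homogeneity (\ref{eq:homo}) I may normalize $\|x\|_2^2 = x_1^2 + x_2^2 = 1$, which forces $|x_1|,|x_2|\le 1$ and hence removes the factor $\max\{1,t^2\}$ in Lemma \ref{lem:scalelaw}.

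The heart of the argument is the estimate itself. Starting from $\|x\|^2 = B(x,x)$, I would peel off the coordinates by repeatedly invoking the additivity and scaling estimates: Lemma \ref{lem:addlaw} replaces $B(x_1u + x_2 v,\, x)$ by $B(x_1 u,\, x) + B(x_2 v,\, x)$, Lemma \ref{lem:scalelaw} pulls the scalars out as $x_1 B(u,x)$ and $x_2 B(v,x)$, and a second pass through the same two lemmas reduces $B(u,x)$ and $B(v,x)$ to $x_1 B(u,u) + x_2 B(u,v) = x_1$ and $x_2$. Summing these reductions telescopes $\|x\|^2$ down to $x_1^2 B(u,u) + 2x_1 x_2 B(u,v) + x_2^2 B(v,v) = x_1^2 + x_2^2 = \|x\|_2^2$, with total error equal to the sum of the error terms accrued along the way. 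Each such error is $\varepsilon$ (times a $1 + O(\varepsilon)$ factor) multiplied by a combination of genuine norms such as $\|x\|^2$, $\|u\pm v\|^2$, and $\|x_1 u - x_2 v\|^2$; these I would control using the triangle inequality, the Cauchy--Schwarz bound of Lemma \ref{csi}, and the normalization, so that the linear-in-$\varepsilon$ contributions aggregate to exactly $15$ and the residual quadratic terms stay within $13.5\varepsilon^2$.

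The main obstacle is precisely this bookkeeping. Keeping the leading coefficient equal to $15$ requires that each genuine-norm factor sitting inside an $\varepsilon$-weighted error be evaluated at its Euclidean value ($\approx x_1^2 + x_2^2$, or $\approx 2$ for $\|u \pm v\|^2$) rather than replaced by a crude triangle-inequality estimate, since a crude bound would inflate the linear term. But those norm factors are themselves only known to match their Euclidean values up to the very distortion being estimated, so a mild self-reference appears that must be resolved by a bootstrap (or by a sufficiently careful direct estimate via (\ref{eq:2})), with the second-order cost absorbed into the non-tight constant $13.5$. Once the displayed distortion estimate is in hand, it gives $\|\Lambda^{-1}\| \le \sqrt{1 + 15\varepsilon + 13.5\varepsilon^2}$ and $\|\Lambda\| \le 1/\sqrt{1 - 15\varepsilon - 13.5\varepsilon^2}$, so that $d(X,\mathbb{R}^2) \le \|\Lambda\|\,\|\Lambda^{-1}\| \le \sqrt{(1 + 15\varepsilon + 13.5\varepsilon^2)/(1 - 15\varepsilon - 13.5\varepsilon^2)}$, which Taylor-expands to $1 + 15\varepsilon + O(\varepsilon^2)$.
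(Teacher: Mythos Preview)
Your setup---choosing unit vectors $u,v$ with $[u,v]=0$ via the intermediate value theorem and then comparing $\|\cdot\|$ to the Euclidean norm in those coordinates---matches the paper exactly. Where you diverge is in the core estimate, and there your route is less efficient than the paper's and will not produce the stated constants $15$ and $13.5$.

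The paper never expands $B(x,x)$ bilinearly. By homogeneity it suffices to test on vectors of the form $u+tv$, and for these one writes
\[
\|u+tv\|^2 \;=\; \frac{\|u+tv\|^2 + \|u-tv\|^2}{2} \;+\; \frac{1}{2}\,[u,tv].
\]
The symmetric half is within $\varepsilon(1+t^2)$ of $1+t^2$ directly from (\ref{eq:main})---no lemma needed. For the antisymmetric half, since $[u,v]=0$ a \emph{single} application of Lemma~\ref{lem:scalelaw} gives $|[u,tv]|\le (1+t^2)\varepsilon(28+27\varepsilon)$, contributing $(14\varepsilon+13.5\varepsilon^2)(1+t^2)$ after halving. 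The total is exactly $(15\varepsilon+13.5\varepsilon^2)(1+t^2)$: one use of the raw parallelogram inequality plus one use of the scaling lemma, and Lemma~\ref{lem:addlaw} is never invoked.

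Your full expansion, by contrast, calls Lemma~\ref{lem:addlaw} at least three times and Lemma~\ref{lem:scalelaw} several more, each contributing its own $O(\varepsilon)$ error. Even if you evaluate the norm factors inside those errors at their Euclidean values (which, as you correctly note, already requires a bootstrap), the linear coefficients will sum to substantially more than $15$; your assertion that they ``aggregate to exactly $15$'' is not supported. So your approach would establish a qualitatively correct bound $d(X,\mathbb{R}^2)\le 1+C\varepsilon+O(\varepsilon^2)$ for some $C$, but not the theorem as stated. The missing idea is the symmetric/antisymmetric split above, which lets the raw inequality (\ref{eq:main}) do the work you are assigning to Lemma~\ref{lem:addlaw}.
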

\begin{proof}
Let $x$ be an arbitrary unit element in $X$. Since $[x, x] = -[x, -x] = 4$, connectedness of the unit sphere in $X$ implies there is a unit vector $y \in X$ with $[x, y]$ = 0. Let $\Lambda: X \to \mathbb{R}^2$ send $ax + by$ to $(a, b)$. For $t \in \mathbb{R}$, (\ref{eq:main}) implies that
\begin{equation*} \left| \cfrac{ ||x + ty||^2 + ||x - ty||^2}{2(1 + t^2)} - 1 \right| \leq \varepsilon. \end{equation*}
Because $[x, y] = 0$, Lemma \ref{lem:scalelaw} shows that 
\begin{equation*}
\begin{aligned}
	\big| ||x + ty||^2 - ||x - ty||^2 \big| &= \big| [x, ty] | \\
						      &\leq \textrm{max}\{1, t^2\}\varepsilon \big[ (8 + 7\varepsilon)||x||^2 + (20 + 20\varepsilon)||y||^2 \big] \\
						      &\leq (1 + t^2)\varepsilon(28 + 27\varepsilon).
\end{aligned}
\end{equation*}
The combination of these two inequalities implies that 
\begin{equation*} \left| \cfrac{||x + ty||^2}{1 + t^2} - 1\right| \leq \varepsilon + \varepsilon(14 + 13.5 \varepsilon) =  15\varepsilon + 13.5 \varepsilon^2. \end{equation*}
That is, 
\begin{equation*} 1 - 15\varepsilon - 13.5 \varepsilon^2 \leq \cfrac{||x + ty||^2}{||\Lambda(x + ty)||^2} \leq 1 + 15 \varepsilon + 13.5 \varepsilon^2 \end{equation*}
The set $\{x + ty: t \in \mathbb{R}\}$ is a sufficient testing set for calculating $||\Lambda||$ and $||\Lambda||^{-1}$, so the desired result follows.
\end{proof}

\section{Higher-Dimensional Results}
To extend the result of Theorem \ref{thm:main} to higher dimensions, we apply an induction argument. The following lemma shows the approximate bilinearity of the bracket in this setting.
\begin{lem}\label{lem:linear} Suppose $X$ is a normed, real vector space with von Neumann-Jordan constant $M = \varepsilon + 1$. Also, let $x_1, ... , x_k$ be the images of standard basis vectors under a linear map $\Lambda: \mathbb{R}^k \to X$ with $||\Lambda|| \leq 1$. 
Then, for any $y \in X$ with $||y|| \leq 1$ and for any $a_1, . . . , a_{n - 1} \in \mathbb{R}$,
\begin{equation*} \cfrac{\left| [\sum_{i = 1}^k a_ix_i, y] - \sum_{i = 1}^k a_i [x_i, y] \right|}{1 + \sum_{i = 1}^k a_i^2} \leq 36 k \varepsilon + O(\varepsilon^2). \end{equation*} 
Note that the $O(\varepsilon^2)$ terms do depend on $k$, but not on $y$ or any $a_i$.
\end{lem}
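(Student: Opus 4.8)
The plan is to reduce this $k$-term estimate to repeated applications of the two-term additivity bound (Lemma \ref{lem:addlaw}) and the scalar bound (Lemma \ref{lem:scalelaw}), with the hypothesis $||\Lambda|| \leq 1$ doing the essential work of controlling every norm that appears. The first thing I would record is that, since $\Lambda$ sends the Euclidean unit ball into the unit ball of $X$, for any coefficients $b_1, \dots, b_k$ we have $||\sum_i b_i x_i|| = ||\Lambda(b_1, \dots, b_k)|| \leq \sqrt{\sum_i b_i^2}$. In particular each $||x_i|| \leq 1$, and writing $w_j = \sum_{i = 1}^j a_i x_i$ and $S = \sum_{i = 1}^k a_i^2$, every partial sum and every sign-modified partial sum satisfies $||w_j||^2 \leq S$ and $||w_{j - 1} - a_j x_j||^2 \leq S$. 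These are the only norm bounds the argument needs, and they are exactly what make the final quotient depend on $k$ but neither on the individual $a_i$ nor on $y$.

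Next I would telescope the additivity. Writing $[w_k, y] = [w_{k - 1} + a_k x_k, y]$ and peeling off one summand at a time, $k - 1$ applications of Lemma \ref{lem:addlaw} (with $z = y$ and the two slots being $w_{j - 1}$ and $a_j x_j$, so that the error norms are $||w_{j-1} + a_j x_j||^2 = ||w_j||^2$ and $||w_{j-1} - a_j x_j||^2$) yield
\[
\Big| [\sum_{i = 1}^k a_i x_i,\, y] - \sum_{j = 1}^k [a_j x_j, y] \Big| \leq \sum_{j = 2}^k \varepsilon\big[(3 + 2\varepsilon)||w_j||^2 + ||w_{j - 1} - a_j x_j||^2 + 8(1 + \varepsilon)||y||^2\big].
\]
Substituting $||w_j||^2 \leq S$, $||w_{j - 1} - a_j x_j||^2 \leq S$, and $||y||^2 \leq 1$, then bounding $k - 1 \leq k$, collapses the right-hand side to $k\varepsilon[(4 + 2\varepsilon)S + 8(1 + \varepsilon)]$.

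Then I would handle the scaling. Lemma \ref{lem:scalelaw} applied to each pair $(a_j x_j, y)$, together with $\max\{1, a_j^2\} \leq 1 + a_j^2$ and $||x_j||, ||y|| \leq 1$, gives $|[a_j x_j, y] - a_j [x_j, y]| \leq (1 + a_j^2)\varepsilon(28 + 27\varepsilon)$, and summing over $j$ produces $\varepsilon(28 + 27\varepsilon)(k + S)$. Adding the additivity and scaling errors and dividing by $1 + S$ leaves a bound whose linear-in-$\varepsilon$ part is $\cfrac{(4k + 28)S + 36k}{1 + S}$; rewriting this as $(4k + 28) + \cfrac{32k - 28}{1 + S}$, which is a decreasing function of $S \geq 0$ whenever $k \geq 1$, shows it is maximized at $S = 0$ with value exactly $36k$. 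This is the claimed leading coefficient.

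The one place I expect genuine care to be needed is the $O(\varepsilon^2)$ bookkeeping and the uniformity it is asserted to have. I would verify that the quadratic contributions — the $2\varepsilon S$ inside the additivity estimate and the $27\varepsilon(k + S)$ inside the scaling estimate — likewise become, after division by $1 + S$, a fraction bounded by a constant depending only on $k$ (the same monotonicity argument gives a ceiling of $35k$). That is what secures the independence of the error from $y$ and from the $a_i$. None of the individual steps is difficult; the conceptual point of the whole proof is simply that $||\Lambda|| \leq 1$ allows every norm in sight to be replaced by $\sqrt{S}$ or by $1$, so that the accumulated error scales like a $k$-dependent constant times $(1 + S)\varepsilon$, after which the normalization by $1 + S$ returns the clean multiple of $k$.
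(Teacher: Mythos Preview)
Your proposal is correct and follows essentially the same route as the paper: telescope the additivity via $k-1$ applications of Lemma~\ref{lem:addlaw}, handle each scalar via Lemma~\ref{lem:scalelaw}, and use $\|\Lambda\| \leq 1$ to bound every partial-sum norm appearing in the error terms by $\sqrt{S}$. The only cosmetic difference is in the final arithmetic --- the paper directly asserts the combined linear error is at most $36k(1+S)\varepsilon$, whereas you carry out a slightly more explicit monotonicity-in-$S$ argument to reach the same $36k$ coefficient.
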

\begin{proof}
First, we establish some terminology. A subsum of $\sum_{i = 1}^k a_i x_i$ is a formal sum $A = \sum_{j = 1}^m a_{i_j} x_{i_j}$ written in reduced form. Two subsums are called disjoint if they contain no common $x_i$ terms. 
The fact that $||\Lambda|| \leq 1$ implies that $||x_i|| \leq 1$ and more generally $||A||^2 \leq \sum_{j = 1}^m {a_{i_j}}^2$. Consequently, if $A$ and $B$ are disjoint subsums, then $||A \pm B||^2 \leq \sum_{i = 1}^k a_i^2$.
Now, if $k = 1$, then Lemma \ref{lem:scalelaw} implies that
\begin{equation*}
\begin{aligned}
	|[a_1 x_1, y] - a_1[x_1, y]| &\leq \textrm{max}\{1, a_1^2\}\varepsilon\big[ (8 + 7 \varepsilon)||x_1||^2 + (20 + 20\varepsilon)||y||^2 \big] \\
					          &\leq \textrm{max}\{1, a_1^2\}\varepsilon\big[28 + 27\varepsilon\big] \\
					          &\leq (1 + a_1^2)\varepsilon\big[28 + 27\varepsilon\big]. \\
\end{aligned}
\end{equation*}
Now, let $k$ be arbitrary, and let $A$ and $B$ be disjoint subsums of $\sum_{i = 1}^k a_ix_i$ with $A + B = \sum_{i = 1}^k a_ix_i$. So, $||A \pm B||^2 \leq \sum_{i = 1}^k a_i^2$ and by Lemma \ref{lem:addlaw}, 
\begin{equation*}
\begin{aligned}
	\big| [A + B, y] - [A, y] - [B, y] \big| &\leq \varepsilon \big[ (3 + 2\varepsilon)||A + B||^2 + ||A - B||^2 + 8(1 + \varepsilon)||y||^2\big] \\
										 &\leq \varepsilon\big[(4 + 2\varepsilon)\sum_{i = 1}^k a_i^2 + 8 + 8\varepsilon\big].
\end{aligned}
\end{equation*}
This sets up an induction on $k$, immediately yielding that $\left| [\sum_{i = 1}^k a_ix_i, y] - \sum_{i = 1}^k a_i [x_i, y] \right|$ is bounded above by
\begin{equation*}
\begin{aligned}
	(k - 1) \varepsilon\big[(4 + 2\varepsilon)\sum_{i = 1}^k a_i^2 + 8 + 8\varepsilon\big] + (k + \sum_{i = 1}^k a_i^2)\varepsilon\big[28 + 27\varepsilon\big].
\end{aligned}
\end{equation*}
This expression is bounded above by an expression of the form
\begin{equation*}
	(k - 1)\varepsilon (4 \sum_{i = 1}^k a_i^2 + 8) + \varepsilon(28k + 28 \sum_{i = 1}^ka_i^2) + (1 + \sum_{i = 1}^k a_i^2) O(\varepsilon^2),
\end{equation*}
which is in turn bounded by 
\begin{equation*}
	36 k \varepsilon(1 + \sum_{i = 1}^k a_i^2) + (1 + \sum_{i = 1}^k a_i^2) O(\varepsilon^2).
\end{equation*}
\end{proof}
Now, the approximate bilinearity of the bracket established above will help show $X$ is nearly isometric with a direct sum of two of its smaller dimensional subspaces. The norm in the direct sum is defined in a Euclidean fashion.
\begin{lem}\label{lem:induct}
	Suppose $X$ is an $n$-dimensional real vector space with von Neumann-Jordan constant $\varepsilon + 1$ and $\Lambda: \mathbb{R}^{n - 1} \to Y \leq X$ is an invertible map with $||\Lambda|| \leq 1$, $||\Lambda^{-1}|| \leq K$ (so $K \geq 1)$. If $x_1, \ldots, x_{n - 1} \in X$ are the images of the standard basis 
vectors of $\mathbb{R}^{n - 1}$ under $\Lambda$ and $x_n \in X$ has $||x_n|| = 1$ and $|[x_i, x_n]| \leq \delta$ for $i < n$, then
\begin{equation*}
	\left| \cfrac{||\sum_{i = 1}^{n - 1} a_i x_i + x_n||^2}{||\sum_{i = 1}^{n - 1} a_i x_i||^2 + 1} - 1 \right| \leq \varepsilon + \cfrac{K^2}{2}(\sqrt{n - 1}\delta + 36(n - 1)\varepsilon + O(\varepsilon^2))
\end{equation*}
for any $a_1, \ldots, a_{n - 1} \in \mathbb{R}$.
\end{lem}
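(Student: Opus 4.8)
The plan is to read the left-hand ratio as a measurement of how far $||v + x_n||^2$ deviates from the ``Pythagorean'' value $||v||^2 + 1$, where I abbreviate $v = \sum_{i = 1}^{n - 1} a_i x_i \in Y$. In an inner product space this deviation would be exactly $2\langle v, x_n \rangle$, and since the bracket is four times the inner product, the key algebraic step is to extract $\frac{1}{2}[v, x_n]$ as the leading correction. After disposing of the trivial case $v = 0$ (where the ratio is exactly $1$), I would apply (\ref{eq:2}) with $a = v$ and $b = x_n$ to obtain
\[ \left|\, ||v + x_n||^2 + ||v - x_n||^2 - 2(||v||^2 + 1) \,\right| \leq 2\varepsilon(||v||^2 + 1), \]
and then average the sum and difference of squares through the identity $||v + x_n||^2 = \frac{1}{2}(||v + x_n||^2 + ||v - x_n||^2) + \frac{1}{2}[v, x_n]$. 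This yields $\big|\, ||v + x_n||^2 - (||v||^2 + 1) - \frac{1}{2}[v, x_n] \,\big| \leq \varepsilon(||v||^2 + 1)$, so that after dividing by $||v||^2 + 1$ it only remains to control the quantity $\frac{1}{2}\,|[v, x_n]| / (||v||^2 + 1)$.

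The second step estimates the bracket $[v, x_n]$ itself. Since $||x_n|| = 1$ and $||\Lambda|| \leq 1$, Lemma \ref{lem:linear} applies with $k = n - 1$ and $y = x_n$, giving
\[ \left|\, [v, x_n] - \sum_{i = 1}^{n - 1} a_i [x_i, x_n] \,\right| \leq \big(36(n - 1)\varepsilon + O(\varepsilon^2)\big)\Big(1 + \sum_{i = 1}^{n - 1} a_i^2\Big). \]
The main term is then handled by the hypothesis $|[x_i, x_n]| \leq \delta$ together with Cauchy--Schwarz: writing $a = (a_1, \ldots, a_{n - 1})$, we have $\big|\sum a_i [x_i, x_n]\big| \leq \delta \sum |a_i| \leq \delta \sqrt{n - 1}\, ||a||_2$, where $||a||_2$ is the Euclidean norm of the coefficient vector.

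The final step is the bookkeeping that converts these coefficient quantities into the geometric denominator $||v||^2 + 1$, and this is where the constant $K$ enters and the stated form of the bound is pinned down. Because $v = \Lambda(a)$ and $||\Lambda^{-1}|| \leq K$, we have $||a||_2 \leq K||v||$, whence $\sum a_i^2 \leq K^2 ||v||^2$ and, since $K \geq 1$, $1 + \sum a_i^2 \leq K^2(||v||^2 + 1)$; the linearity-error term is thus bounded after dividing by using $(1 + \sum a_i^2)/(||v||^2 + 1) \leq K^2$. For the $\delta$-term, the inequality $||v||^2 + 1 \geq 2||v||$ gives $||a||_2/(||v||^2 + 1) \leq K/2 \leq K^2/2$. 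Assembling these, dividing by $||v||^2 + 1$, and restoring the $\varepsilon$ from the parallelogram estimate produces exactly $\varepsilon + \frac{K^2}{2}\big(\sqrt{n - 1}\,\delta + 36(n - 1)\varepsilon + O(\varepsilon^2)\big)$. I expect the only genuine subtlety to be this constant-matching at the end: routing the Cauchy--Schwarz factor $\sqrt{n - 1}$ and the two different normalizations ($||a||_2$ versus $1 + \sum a_i^2$) through the single estimate $||\Lambda^{-1}|| \leq K$ so that both the $\delta$ and $\varepsilon$ contributions land under the common prefactor $K^2/2$. The polarization manipulation of the first two steps is essentially the same device already exploited in Theorem \ref{thm:main}.
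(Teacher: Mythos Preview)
Your proposal is correct and follows essentially the same route as the paper: polarize $||v+x_n||^2$ into a parallelogram half (bounded by $\varepsilon$) and a bracket half $\tfrac{1}{2}[v,x_n]$, then linearize the bracket via Lemma~\ref{lem:linear}, and finally push the coefficient quantities through $||\Lambda^{-1}||\le K$ and Cauchy--Schwarz. The only cosmetic difference is in the bookkeeping: the paper first replaces the denominator $||v||^2+1$ by $(\sum a_i^2+1)/K^2$ once and then bounds $\sum|a_i|/(\sum a_i^2+1)\le\sqrt{n-1}$, whereas you treat the $\delta$- and $\varepsilon$-contributions separately (your $\delta$-estimate via $||a||_2/(||v||^2+1)\le K/2\le K^2/2$ actually yields the slightly sharper $\tfrac{K^2}{4}\sqrt{n-1}\,\delta$, which of course still implies the stated bound).
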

\begin{proof}
	The approximate parallegram law ($\ref{eq:main}$) and the fact that $||\Lambda^{-1}|| \leq K$ imply that
\begin{equation*}
\left| \cfrac{||\sum_{i = 1}^{n - 1}a_i x_i + x_n||^2}{||\sum_{i = 1}^{n - 1}a_i x_i||^2 + 1} - 1\right| \leq 
\end{equation*}
\begin{equation*}
	\left | \cfrac{ ||\sum_{i = 1}^{n - 1}a_i x_i + x_n||^2 + ||\sum_{i = 1}^{n - 1}a_i x_i - x_n||^2}{2( ||\sum_{i = 1}^{n - 1}a_i x_i ||^2 + 1)}  - 1 \right| + \cfrac{1}{2} \cfrac{ | [\sum_{i = 1}^{n - 1}a_i x_i, x_n] | }{ ||\sum_{i = 1}^{n - 1}a_i x_i||^2 + 1} \leq 
\end{equation*}
\begin{equation*}	
	\varepsilon +  \cfrac{K^2}{2} \cfrac{ | [\sum_{i = 1}^{n - 1}a_i x_i, x_n] | }{ \sum_{i = 1}^{n - 1}a_i^2 + 1}.
\end{equation*}
Now, we may apply Lemma $\ref{lem:linear}$ to see that this is bounded above by 
\begin{equation*}
	\varepsilon + \cfrac{K^2}{2} \cdot \left( \cfrac{|\sum_{i = 1}^{n - 1}a_i[x_i, x_n]|}{\sum_{i = 1}^{n - 1}a_i^2 + 1} + 36(n - 1)\varepsilon + O(\varepsilon^2) \right) \leq
\end{equation*}
\begin{equation*}
	\varepsilon + \cfrac{K^2}{2} \cdot \left( \cfrac{\delta \sum_{i = 1}^{n - 1}|a_i| }{\sum_{i = 1}^{n - 1}a_i^2 + 1} + 36(n - 1)\varepsilon + O(\varepsilon^2)\right) \leq 
\end{equation*}
\begin{equation*}
	\varepsilon + \cfrac{K^2}{2} \cdot \left( \sqrt{n - 1}\delta + 36(n - 1)\varepsilon + O(\varepsilon^2)\right).
\end{equation*}
\end{proof}
If $\varepsilon$ and $\delta$ are small enough, the lemma proves a bound on $d(X, Y \oplus \mathbb{R})$. 
Since our overall goal is to establish that a small von Neumann-Jordan constant results in a small Banach-Mazur distance between $X$ and $\mathbb{R}^{n}$, we will succeed so long as we can control the bracket of some unit vector $x_n$ with generators of an $(n - 1)$-dimensional subspace $Y$ of $X$. We would like to choose $x_n$ in such a way that the linear coefficient of $K_n(\varepsilon)$ in the expression $d(X, \mathbb{R}^n) \leq K_n(\varepsilon)$ grows slowly with $n = dim(X)$. Using a Gram-Schmidt type method leads to a linear term in $K_n(\varepsilon)$ with faster than factorial-squared growth, but we can do much better by appealing to a theorem from algebraic topology.
\begin{lem}\label{lem:ortho}
	Suppose $X$ is an $n$-dimensional real normed vector space and let $x_1, \ldots, x_{n - 1} \in X$. Then there is some $x_n \in X$ with $||x_n|| = 1$ and $[x_n, x_i] = 0$ for $i < n$.
\end{lem}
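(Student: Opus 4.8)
The plan is to realize the conclusion as a zero-finding problem for an odd map on a sphere and then invoke the Borsuk--Ulam theorem; this is the ``idea from algebraic topology'' promised in the introduction, and it generalizes the connectedness argument used for $n = 2$ in Theorem \ref{thm:main} (which is precisely the one-dimensional case of Borsuk--Ulam, i.e.\ the intermediate value theorem applied to the odd function $t \mapsto [x, y_t]$ on the unit sphere).

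First I would define a map $f$ on the unit sphere $S = \{v \in X : ||v|| = 1\}$ of $X$ by
\begin{equation*}
	f(v) = \big([v, x_1], [v, x_2], \ldots, [v, x_{n-1}]\big) \in \mathbb{R}^{n-1}.
\end{equation*}
Since the norm is continuous, each coordinate $v \mapsto [v, x_i] = ||v + x_i||^2 - ||v - x_i||^2$ is continuous, so $f$ is continuous. The crucial structural feature is that $f$ is \emph{odd}: by (\ref{eq:negatives}) we have $[-v, x_i] = -[v, x_i]$, hence $f(-v) = -f(v)$ for every $v \in S$.

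Next I would transport $f$ to the standard sphere so that Borsuk--Ulam applies. Fixing any linear identification $X \cong \mathbb{R}^n$, radial projection $u \mapsto u / ||u||$ is an odd homeomorphism from the Euclidean sphere $S^{n-1}$ onto $S$: it is well defined and continuous because $||\cdot||$ is continuous and strictly positive off the origin, and it commutes with the antipodal map. Composing, I obtain a continuous odd map $S^{n-1} \to \mathbb{R}^{n-1}$. The dimensions match exactly: the unit sphere of an $n$-dimensional space has dimension $n-1$, matching the target $\mathbb{R}^{n-1}$, which is what makes Borsuk--Ulam applicable.

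Finally I would invoke the Borsuk--Ulam theorem in the form: every continuous odd map $S^{n-1} \to \mathbb{R}^{n-1}$ has a zero (if $g(-u) = -g(u)$ and $g(u_0) = g(-u_0)$ for some $u_0$, then $g(u_0) = 0$). This produces a point at which $f$ vanishes, i.e.\ a unit vector $x_n \in X$ with $[x_n, x_i] = 0$ for all $i < n$, as desired. The main conceptual step is recognizing that Borsuk--Ulam is the right tool; the only genuine bookkeeping is the correct dimension count and the verification that the sphere homeomorphism is antipode-preserving, so once oddness and continuity are in place I do not anticipate a serious obstacle beyond citing the theorem.
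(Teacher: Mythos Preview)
Your proposal is correct and matches the paper's proof essentially step for step: define the odd continuous map $v \mapsto ([v,x_1],\ldots,[v,x_{n-1}])$ on the unit sphere, transport it to $S^{n-1}$ via an antipode-preserving homeomorphism, and apply Borsuk--Ulam to obtain a zero. Your explicit description of the homeomorphism as radial projection and the remark tying the $n=2$ case to the intermediate value theorem are nice touches, but the argument is otherwise identical to the paper's.
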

\begin{proof}
	The result is trivially true for $n = 1$, and the result for $n = 2$ follows by a simple continuity argument. For higher dimensional $X$, consider the unit sphere $S$ of $X$. Since $X$ is an $n$-dimensional normed space, there is a homeomorphism $g$ from the standard sphere $S^{n - 1}$ to $S$ such that $g(-x) = -g(x)$ for all $x \in S^{n - 1}$. Now, compose this map with the following map $f: S \to \mathbb{R}^{n - 1}$:
\begin{equation*}
	f: y \mapsto \left( \begin{array}{c} {[y, x_1]} \\ {[y, x_2]} \\ \vdots \\ {[y, x_{n - 1}]} \end{array} \right).
\end{equation*}
By (\ref{eq:negatives}), $f$ is an odd map, and as such $f \circ g: S^{n - 1} \to \mathbb{R}^{n - 1}$ is also an odd map. However, the Borsuk-Ulam Theorem (see \cite{hatcher}) implies that there is a point $\alpha \in S^{n - 1}$ with $f \circ g (-\alpha) = f \circ g (\alpha)$. Oddness shows that $f \circ g (\alpha) = 0$, so we set $x_n = g(\alpha)$. 
\end{proof}
With the setup completed, the main theorem is finally in sight. Theorem $\ref{thm:main}$ shows that for 2-dimensional $X$, $d(X, \mathbb{R}^2) \leq 1 + 15\varepsilon + O(\varepsilon^2)$ as $\varepsilon$ tends to zero.
\begin{thm}\label{thm:main2}
	Let $n \geq 2$. Then there is a function $K_n(\varepsilon) = 1 + (18n^2 - 17n + 14)\varepsilon + O(\varepsilon^2)$ such that for any $n$-dimensional real normed vector space $X$ with von Neumann-Jordan constant $\varepsilon + 1$ sufficiently small, $d(X, \mathbb{R}^n) \leq K_n(\varepsilon)$.
\end{thm}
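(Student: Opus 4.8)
\section*{Proof proposal}

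The plan is to argue by induction on $n = \dim(X)$, using Theorem \ref{thm:main} as the base case and peeling off a single dimension at each inductive step by means of Lemma \ref{lem:ortho} and Lemma \ref{lem:induct}. For the base case $n = 2$, Theorem \ref{thm:main} gives $d(X, \mathbb{R}^2) \leq 1 + 15\varepsilon + O(\varepsilon^2)$, and since the proposed linear coefficient $18n^2 - 17n + 14$ takes the value $52$ at $n = 2$, this base estimate is dominated by $K_2(\varepsilon)$ once $\varepsilon$ is small. So everything reduces to passing from dimension $n - 1$ to dimension $n$.

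For the inductive step I would fix an $n$-dimensional $X$ with von Neumann-Jordan constant $1 + \varepsilon$ and choose any hyperplane $Y \leq X$. A subspace inherits a von Neumann-Jordan constant no larger than that of $X$, since the bound (\ref{eq:main}) is then imposed over a smaller set; hence $Y$ has constant at most $1 + \varepsilon$, and the inductive hypothesis yields $d(Y, \mathbb{R}^{n-1}) \leq K_{n-1}(\varepsilon)$. I would fix an isomorphism $\Lambda : \mathbb{R}^{n-1} \to Y$ realizing this distance, normalized so that $||\Lambda|| \leq 1$ and $||\Lambda^{-1}|| \leq K := K_{n-1}(\varepsilon)$, and let $x_1, \ldots, x_{n-1}$ be the images of the standard basis. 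Lemma \ref{lem:ortho} then supplies a unit vector $x_n$ with $[x_n, x_i] = 0$ for all $i < n$, so that Lemma \ref{lem:induct} applies with $\delta = 0$; for small $\varepsilon$ the near-orthogonality forces $x_n \notin Y$, so $x_1, \ldots, x_n$ is a basis of $X$.

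I would then combine the two estimates through the natural map $\Psi : X \to \mathbb{R}^{n-1} \oplus \mathbb{R} = \mathbb{R}^n$ (Euclidean direct sum) sending $\sum a_i x_i + t x_n$ to $(a_1, \ldots, a_{n-1}, t)$, which is just the composite of the near-isometry of $Y$ with the splitting of Lemma \ref{lem:induct}. With $\delta = 0$ that lemma controls the ratio of $||\sum a_i x_i + t x_n||^2$ to $||\sum a_i x_i||^2 + t^2$ within a factor $1 \pm C$, where $C = \varepsilon + \frac{K^2}{2}(36(n-1)\varepsilon + O(\varepsilon^2))$, while $||\Lambda|| \leq 1$ and $||\Lambda^{-1}|| \leq K$ trap the ratio of $||\sum a_i x_i||^2$ to $\sum a_i^2$ between $1/K^2$ and $1$. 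Multiplying these gives $d(X, \mathbb{R}^n) \leq K\sqrt{\tfrac{1+C}{1-C}}$. Since $K = K_{n-1}(\varepsilon) \to 1$, the quantity $C$ has linear term $(1 + 18(n-1))\varepsilon = (18n - 17)\varepsilon$, so the linear coefficient of the new bound is that of $K_{n-1}$ raised by an increment of order $n$. Summing an increment of order $n$ over the induction yields a coefficient of order $n^2$, and a direct check shows that $18n^2 - 17n + 14$ dominates the recursively generated coefficient at every step, leaving a positive linear surplus of $18(n-1)\varepsilon$ to spare.

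The hard part is the management of this surplus against the error terms rather than the bound itself. The $O(\varepsilon^2)$ corrections in $C$ carry the factor $K^2 = K_{n-1}(\varepsilon)^2 = 1 + O(n^2\varepsilon)$, and further cross terms appear when $K$ multiplies $\sqrt{\tfrac{1+C}{1-C}}$, so the quadratic coefficients in the bound grow like a power of $n$ as the induction proceeds. What must be verified is that at each stage the linear surplus $18(n-1)\varepsilon$ absorbs these $n$-dependent quadratic terms, which it does precisely because the theorem permits $\varepsilon$ to be small in a manner depending on $n$. The remaining points --- that the subspace constant is controlled, that $\Psi$ is genuinely invertible, and that the clean quadratic really does dominate the accumulated coefficient --- are routine once this buffer argument is in place.
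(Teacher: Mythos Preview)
Your proposal is correct and follows essentially the same route as the paper: induct on $n$, use Theorem~\ref{thm:main} as the base case, pick a hyperplane $Y$, apply Lemma~\ref{lem:ortho} to find a bracket-orthogonal unit vector $x_n$, feed this into Lemma~\ref{lem:induct} with $\delta = 0$, and then combine the resulting near-splitting with the inductive bound on $d(Y,\mathbb{R}^{n-1})$. The only cosmetic difference is that the paper phrases the combination step as $d(X,\mathbb{R}^n) \le d(X, Y\oplus\mathbb{R})\cdot d(Y,\mathbb{R}^{n-1})$ via submultiplicativity, whereas you compose the two near-isometries directly into a single map $\Psi$; your arithmetic (increment $18n-17$, leaving a surplus of $18(n-1)$ against the stated $\beta_n$) is in fact a bit sharper than the paper's, which records the increment as $36n-35$ so that the recursion $\beta_{n-1} + 36n - 35 = \beta_n$ comes out exactly.
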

\begin{proof}
	Let $\beta_k = 18k^2 - 17k + 14$ for all $k$, and note the theorem already holds for $n = 2$ by Theorem \ref{thm:main}, so we may induct. Let $Y$ be an arbitrary $(n - 1)-$dimensional subspace of $X$. Since $Y$ has von Neumann-Jordan constant at most $\varepsilon + 1$, we know that $d(Y, \mathbb{R}^{n - 1}) \leq K_{n - 1}(\varepsilon)$ if $\varepsilon$ is small. Let 
$\Lambda: \mathbb{R}^{n - 1} \to Y$ be an isomorphism with $||\Lambda|| \leq 1$ and $||\Lambda^{-1}|| \leq K_{n - 1}$, with $x_i = \Lambda(e_i)$. By Lemma \ref{lem:ortho}, there is some
$x_n \in X$ with $||x_n|| = 1$ and $[x_n, x_i] = 0$ for $i < n$. This $x_n$ satisfies the conditions of Lemma \ref{lem:induct} with $\delta = 0$, so for any $a_1, \ldots, a_{n -  1} \in \mathbb{R}$, 
\begin{equation*}
\begin{aligned}
	\left| \cfrac{||\sum_{i = 1}^{n - 1} a_i x_i + x_n||^2}{||\sum_{i = 1}^{n - 1} a_i x_i||^2 + 1} - 1 \right| &\leq \varepsilon + \cfrac{K_{n - 1}(\varepsilon)^2}{2}(36(n - 1)\varepsilon + O(\varepsilon^2)) \\
																	&= \varepsilon + (1 + \beta_{n - 1}\varepsilon + O(\varepsilon^2))(36(n - 1)\varepsilon + O(\varepsilon^2)) \\
																	&= (36n - 35)\varepsilon + O(\varepsilon^2).
\end{aligned}
\end{equation*}
As in Theorem $\ref{thm:main}$, $\{ a_1x_1 + \ldots a_{n - 1}x_{n - 1} + x_n: a_1, \ldots a_{n - 1} \in \mathbb{R}\} = \{y + x_n: y \in Y\}$ is a sufficient testing set for calculating $||T||$ and $||T^{-1}||$, where $T$ maps $(y, t) \in Y \oplus \mathbb{R}$ to $y + tx_n \in X$. The above estimate gives us that 
\begin{equation*}
	d(X, Y \oplus \mathbb{R}) \leq 1 + 36n - 35\varepsilon + O(\varepsilon^2). 
\end{equation*} 
Now, finally, 
\begin{equation*}
\begin{aligned}
	d(X, \mathbb{R}^n) 	&\leq d(X, Y \oplus \mathbb{R}) \cdot d(Y \oplus \mathbb{R}, \mathbb{R}^n) \\
			&\leq d(X, Y \oplus \mathbb{R})\cdot  d(Y, \mathbb{R}^{n - 1}) \\
			&\leq (1 + (36n - 35)\varepsilon + O(\varepsilon^2))(1 + \beta_{n - 1}\varepsilon + O(\varepsilon^2)) \\
			&= 1 + (\beta_{n - 1} + 36n - 35)\varepsilon + O(\varepsilon^2) \\
			&= 1 + \beta_n \varepsilon + O(\varepsilon^2).
\end{aligned}
\end{equation*}
\end{proof}
The linear term of $K_n(\varepsilon)$ has growth which is, at worst, quadratic. Unfortunately, we do not know if this is an optimal growth rate, and in fact, we conjecture that it is not.
\begin{prop}
	Let $X$ be a Lebesgue space of finite dimension $n$ (i.e. $\ell_p^n$ for some $p \in [1, \infty])$ with von Neumann-Jordan constant $\varepsilon + 1$. Then $d(X, \mathbb{R}^n) \leq n^{log_2(\varepsilon + 1)/2} = 1 + log_4(n)\varepsilon + O(\varepsilon^2)$.
\end{prop}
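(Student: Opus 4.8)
The plan is to reduce the proposition to two classical facts about the spaces $\ell_p^n$ and to observe that only the \emph{easy} direction of each is actually needed. The key preliminary observation is that the exponent on the right-hand side is exactly the familiar quantity $|1/2 - 1/p|$ that simultaneously governs both the von Neumann--Jordan constant and the Banach--Mazur distance of $\ell_p^n$. Writing $M = \varepsilon + 1$ for the von Neumann--Jordan constant, the target bound $d(X, \mathbb{R}^n) \le n^{\log_2(\varepsilon+1)/2}$ has exponent $\tfrac{1}{2}\log_2 M$, so everything hinges on comparing $\tfrac{1}{2}\log_2 M$ with $|1/2 - 1/p|$.

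For the lower bound on $M$, I would simply test the defining ratio (\ref{eq:main}) on the pair $a = e_1$, $b = e_2$. Since $||e_1 \pm e_2||_p^2 = 2^{2/p}$ while $||e_1||_p = ||e_2||_p = 1$, the ratio equals $2^{2/p - 1}$, so $M$ is at least $\max\{2^{2/p - 1}, 2^{1 - 2/p}\} = 2^{|1 - 2/p|}$ (for $p \le 2$ the ratio itself exceeds $1$, for $p \ge 2$ its reciprocal does, and the endpoints $p = 1, \infty$ both give $M \ge 2$). Taking $\log_2$ and halving yields $\tfrac{1}{2}\log_2 M \ge |1/2 - 1/p|$. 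Crucially, this uses only the cheap lower estimate on the von Neumann--Jordan constant; the harder Clarkson-inequality direction $M \le 2^{|1-2/p|}$ is never needed.

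For the upper bound on the Banach--Mazur distance, I would use the identity map $I : \ell_p^n \to \ell_2^n$ together with the elementary nested norm inequalities $||x||_q \le ||x||_p \le n^{1/p - 1/q}||x||_q$ valid for $p \le q$. When $p \le 2$ these give $||I|| \le 1$ and $||I^{-1}|| \le n^{1/p - 1/2}$; when $p \ge 2$ the symmetric statement holds. In either case $||I|| \cdot ||I^{-1}|| \le n^{|1/2 - 1/p|}$, so $d(\ell_p^n, \ell_2^n) \le n^{|1/2 - 1/p|}$.

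Combining the two steps and using that $t \mapsto n^t$ is increasing (as $n \ge 1$), I would conclude
\[
	d(X, \mathbb{R}^n) \le n^{|1/2 - 1/p|} \le n^{\frac{1}{2}\log_2 M} = n^{\log_2(\varepsilon+1)/2}.
\]
Finally, the stated asymptotic is a pure Taylor expansion, independent of $p$: since $n^{\log_2(\varepsilon+1)/2} = (\varepsilon+1)^{\log_4 n}$, the binomial expansion gives $(1 + \varepsilon)^{\log_4 n} = 1 + \log_4(n)\,\varepsilon + O(\varepsilon^2)$. The only mild obstacle here is bookkeeping the case split (the range $p \le 2$ versus $p \ge 2$, together with the endpoints $p = 1$ and $p = \infty$) consistently across both the norm inequalities and the ratio test. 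Beyond that the argument is entirely elementary, precisely because the two inequalities we need are exactly the easy halves of the classical identities $M = 2^{|1 - 2/p|}$ and $d(\ell_p^n, \ell_2^n) = n^{|1/2 - 1/p|}$.
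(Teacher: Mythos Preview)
Your proof is correct and follows the same skeleton as the paper's: bound $d(\ell_p^n,\ell_2^n)$ by $n^{|1/2-1/p|}$ via the identity map, then match this exponent with $\tfrac{1}{2}\log_2 M$. The paper cites Clarkson's theorem that $C_{NJ}(\ell_p^n)=2^{|2-p|/p}$ exactly, obtaining the equality $|1/2-1/p|=\tfrac{1}{2}\log_2 M$, and similarly states $||I||\cdot||I^{-1}||=n^{|1/2-1/p|}$ as an equality. You instead note that only the trivial lower bound $M\ge 2^{|1-2/p|}$ (from the single test pair $e_1,e_2$) and only the upper bound on $||I||\cdot||I^{-1}||$ are needed, since together they already give $d\le n^{|1/2-1/p|}\le n^{\frac{1}{2}\log_2 M}$. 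This is a small but genuine simplification: it makes the argument self-contained and avoids invoking Clarkson's inequalities, at the harmless cost of replacing two intermediate equalities by inequalities---which is all the proposition requires.
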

\begin{proof}
	Clarkson proved in \cite{clarkson} that the von Neumann-Jordan constant of $\ell_p^n$ is $2^{|2 - p|/p}$, and it is easy to show that the identity map from $\ell_p^n$ to $\ell_2^n$ has $||Id|| \cdot ||Id||^{-1} = n^{|1/2 - 1/p|}$. The rest is trivial.
\end{proof}
Whether logarithmic growth of the linear term above is indicative of the general case or a result of exceptional symmetry present in the Lebesgue spaces has yet to be determined.

\section*{Acknowledgements}

I would like to thank three professors: John McCarthy and Ari Stern for remarks that led to examining this problem, Renato Feres for his explanations of algebraic topology, and all three for answering my questions. There were many.

\end{document}